\documentclass[12pt,twoside]{article}
\usepackage{amsmath,amsfonts}

\newtheorem{thm}{Theorem}[section]

\date{}

\begin{document}

\centerline{}
\centerline{\Large{\bf On algebraic independence of three p-adic continued
 fractions}}

\centerline{}


\newcommand{\mvec}[1]{\mbox{\bfseries\itshape #1}}

\centerline{\bf {Sarra Ahallal, Mohamed Begare and Ali Kacha}}

\centerline{}

\centerline{Department of Mathematics, Faculty of Science,}

\centerline{Ibn Tofail University, 14 000 Kenitra,  Morocco  }

\centerline{e-mails: sarraahallal@uit.ac.ma, mohamed.begare@uit.ac.ma, ali.kacha@uit.ac.ma }

\centerline{}

\newtheorem{Theorem}{\quad Theorem}[section]

\newtheorem{Definition}[Theorem]{\quad Definition}

\newtheorem{Corollary}[Theorem]{\quad Corollary}

\newtheorem{Lemma}[Theorem]{\quad Lemma}

\newtheorem{Example}[Theorem]{\quad Example}

{\bigskip}

\begin{abstract}
\noindent \textbf{\emph{In this paper, we establish sufficient conditions on the elements of the p-adic continued fractions
$A$ and $B$  which guarantee that the p-adic continued fractions
$A, B $ and $A^{B}$ are algebraically independent over $\mathbb{Q}$. These elements have partial quotients that increase rapidly. We
note that these results extend some work of Bundschuh. Furthermore, we give some numerical examples which illustrated the theoretical results.
}}
\end{abstract}

\noindent {\bf Key words:} \emph{p-adic continued fraction, rational approximation, algebraic independence.}\\
{\textbf 2020 Mathematics Subject Classification:} 11J61, 11J70,11J81
11J85.

\section{Introduction }

{\bigskip }

\noindent
The theory of algebraic independence of numbers has a long history.
In the case of p-adic numbers, a parallel approach to algebraic independence has been developed based on Diophantine approximation techniques adapted to the metric
p-adic metric. \\
In particular, the work of K. Mahler introduced a theory p-adic theory of transcendence that plays a similar role to that of Liouville and Roth in the real case. The transcendence of certain fast-growing continued fractions has been studied in the real setting by several authors, notably P.Bundschuh $[5]$, A. Durand $[6]$, W. Lianxiang $[7]$, G. Nettler
$[10]$ and T. Okano $[11]$.
\\
 These questions naturally arise in the context p-adic framework, where the structure of continued fractional developments and power series has specific properties.\\
Let $A$ and $B$ be
two continued fractions which are defined by
$$
A= a_{0}+\frac{1|}{|a_{1}}+ \frac{1|}{|a_{2}}+\cdot \cdot \cdot
$$
and
 $$
 B=b_{0}+\frac{1|}{|b_{1}}+ \frac{1|}{|b_{2}}+\cdot \cdot
\cdot
$$

\noindent where $a_{i}>0$, $b_{i}>0 $ are integers for any $i\geq 1.$
\\

In $[4],$ we have proved the transcendence and the algebraic independence of $A$ and $B$ when their partial quotients $a_n$ and $b_n$ are unbounded
by using Roth's approximation theorem in the real setting.
\\
We recall that in $[5]$, P. Bundschuh pointed out that no proof of the transcendence of $A^B$ could be obtained by applying Roth's approximation theorem alone. He then proposed a theorem to establish the transcendence of $A^B$ by another method. \\
So, in $[1],$ we have proved the transcendence of the real $A^B$  and in $[3]$  we get the algebraic independence of an arbitrary number of real continued fractions $A_1, A_2,..,A_k$ with restrictive assymptions
\\
We are interested here in the transposition of this approach to the p-adic framework, where Diophantine approximation techniques require adaptation to the ultrametric properties of p-adic numbers.
So, in this case, these questions are even trickier, since the nature of the approximations p-adic approximations and the constraints imposed by non-archimedianity profoundly modify transcendence and algebraic independence analysis.\\
We note that in $[2],$ we have proved the transcendence of the p-adic continued $A^B$ with unbounded p-adic absolute value of partial quotients. \\
\\
The main goal of this paper, is to prove the algebraic independence of $A, B$ and $A^B$ three p-adic continued fractions with unbounded p-adic absolute value of partial quotients. \\
We also remark that in their paper $[9]$ Longhi-Murru-Saettone and in its paper $[12]$ Ooto studied analogy of Baker's transcendence criterion for Ruban continued fractions with bounded p-adic absolute value of partial quotients or which are a sequence begining with arbitrarily by long palindromes.
\\
\\
\section{  Preliminaries and auxiliary results  }

\subsection{D\'efinition of a p-adic continued fraction}

Let $p$ be an odd prim. We denote by $|.|_p$ and $|.|$ the p$-$ adic absolute value and the Euclidean
 absolute value, respectively. We next present the algorithm of a $p-$ adic continued fraction expansion defined by Lianxing $[7]$ which gives a simpler way than of Shneider $[13]$.
 \\
Let  $\xi \in \mathbb{Q}_p,$ using the Hensel expansion of $\xi,$ we find
$$
\xi=\xi_0= c_{0,-m_0}p^{-m_0}+c_{0,-m_0+1}p^{-m_0+1}+...+c_{0,-1}p^{-1}+c_{0,0}+c_{0,1}p+c_{0,2}p^2+...+c_{0,n}p^n+...\
$$
where $m_0 \in \mathbb{N}, 0 \leq c_{0,k} \leq p-1.$\\
\noindent Let
$$a_0=c_{0,-m_0}p^{-m_0}+c_{0,-m_0+1}p^{-m_0+1}+...+c_{0,-1}p^{-1}+c_{0,0},
$$

\noindent since $0 \leq c_{0,k} \leq p-1, $ for all  $-m_0 \leq k \leq 0, $ we have $0 \leq a_0 < p.$
\\
\noindent Then, write $\xi$ in the form
$$
\xi=a_0+\displaystyle{\frac{1}{\xi_1}}.
$$
\noindent So, we have $|\xi_1|_p \geq p$ and $\xi_1$ has the form
$$
\xi_1=c_{1,-m_1}p^{-m_1}+...+c_{1,-1}p^{-1}+c_{1,0}+c_{1,1}p+...+ c_{1,n}p^n+...,
$$
\noindent where $m_1 \geq 1$ and $c_{1,-m_1} \neq 0.$ We also define
$$
a_1=c_{1,-m_1}p^{-m_1}+...+c_{1,-1}p^{-1}+c_{1,0}, \text{ so \ } |a_1|_p=|\xi_1|_p \geq p.
$$

\noindent We continue this process and in general, for $k \geq 1,$  we put
$$
\xi_k=a_k +\frac{1}{\xi_{k+1}}=c_{k,-m_k}p^{-m_k}+...+c_{k,-1}p^{-1}+c_{k,0}+c_{k,1}p+c_{k,2}p^2+...+ c_{k,n}p^n+...,
$$
$$\text{ and \ } a_k=c_{k,-m_k}p^{-m_k}+...+c_{k,-1}p^{-1}+c_{k,0}, \ \text{ with \ } |a_k|_p=|\xi_k|_p=p^{m_k} \geq p \text{ and \ }  0 < a_k <p.$$

\noindent As in the real case, $\xi$ can be written briefly in the form

$$
\xi=[a_0; a_1,a_2,...,a_n,\xi_{n+1}], \ \text{ where \ }
$$
\begin{equation*}
a_n \in  \mathbb{Z}[1/p] \cap (0,p) \ \text{ for \ }  n \geq 1 \text{ and } a_0 \in  \mathbb{Z}[1/p] \cap [0,p).
\end{equation*}

{\bigskip}

\subsection{ Convergents of a p-adic continued fraction}

{\bigskip }

\noindent With the above notations, we define two sequences $(p_n)_{n \geq -1}$ and $(q_n)_{n \geq -1}$ by
\\
\begin{equation*}
\left\{
\begin{array}{c}
p_{-1}=1,\;p_0=a_0 \\
q_{-1}=0,\;q_0=1
\end{array}
\right. \mbox{ and } \ \left\{
\begin{array}{c}
p_n=a_n \;p_{n-1}+ p_{n-2} \\
q_n=a_n \;q_{n-1}+q_{n-2}
\end{array}
\right.  \; n\geq 1.
\end{equation*}

\noindent The elements $p_n$ and $q_n$ are polynomials in $a_0,a_1,a_2,..,a_n $ with coefficients equal to $1.$

{\bigskip }

\noindent {\bf Definition. \ }
\noindent Consider the continued fraction $\xi = [a_0; a_1,a_2,..].$ where $a_k$ are p-adic integer numbers.\\
Let  $\dfrac{P_n}{Q_n}=[a_0; a_1, a_2,...,a_n]$ with
$P_n \in \mathbb{Z},  $ $ Q_n \in \mathbb{N^*}$ and $\gcd(P_n,Q_n)=1.$
\\
\noindent The fraction $\dfrac{P_n}{Q_n}$ is called the $n^{th }$ convergent of the p-adic continued fraction $\xi.$
{\bigskip }

\noindent The main properties of convergents are as follows.

{\bigskip }

\noindent {\bf Lemma 1. \ }
 With the same notations as above, one has
\\
(i)
\begin{equation*}
 |p_n|_p=
\left\{
\begin{array}{c}
|a_0a_1...a_n|_p, \ n \geq 0  \ \hbox{  \ if \ } a_0 \neq 0,  \\
|a_2...a_n|_p  \hbox{ \ and \ } |p_1|_p=1 \  \hbox{  \ if \ } a_0 = 0.  \\
\end{array}
\right.
\end{equation*}

\noindent (ii)
$$
|q_n|_p= |a_1a_2...a_n|_p.
$$
\noindent (iii) $$
\xi -\frac{p_n}{q_n}=\frac{(-1)^n}{q_n(\xi_{n+1}q_n+q_n-1)}.
$$
\noindent (iv)
$$
|\xi-\frac{p_n}{q_n}|_p=\frac{1}{|q_n q_{n+1}|_p} \text{ \ so \ }
\lim_{n \rightarrow +\infty} \frac{p_n}{q_n}=\xi.
$$

\noindent{\bf Proof. } The equalities (i) and (ii) are easily checked by induction. For (iii) and (iv), see $[7].$ \\

{\bigskip}

\noindent {\bf Lemma 2. \ } {\it Let $\xi $ be un element of $\mathbb{Q}_p\setminus \mathbb{Q}$ such that $\xi=[a_0;a_1,...,a_n,...].$  then,
for all $n \geq 1,$
we have
\begin{equation*}
\left\{
\begin{array}{c}
0<p_n \leq (p+1)^{n+1},  \\
0 <p_n \leq (p+1)^{n}.
\end{array}
\right.
\end{equation*}
}
\noindent{\bf Proof. } By using definitions of $p_n$ and $q_n,$ we prove (i) and (ii).

{\bigskip}

\noindent {\bf Lemma 3. } (The algebraic form of Taylor development of order $n$ in $\mathbb{R}^2.$ )
\\
{\it For a development of order $n$, we use the sum of partial derivatives up to order $n$  near a given point $(a,b)$ :
$$f(x,y)=\sum_{k=0}^{n}\sum_{m=0}^{k} \dfrac{1}{m!(k-m)!} \dfrac{\partial^k f(a,b)}{\partial x^m \partial y^{k-m}}(x-a)^m(y-b)^{k-m}+R_n $$
where $R_n$ is the Lagrange remainder, which measures the approximation error.}

{\bigskip}

\section{Main result }

\subsection{ The p-adic continued fraction of $A, B$ and $A^B$}

\noindent Let $p$ be an odd prime integer, $$A\in (1+p\mathbb{Z}_p) \Leftrightarrow \mid A-1 \mid_p \leq p^{-1}<1,$$
$$B\in (p\mathbb{Z}_p) \Leftrightarrow \mid B \mid_p < p^{-1}<1.
$$
\\
\noindent Define the p-adic $\ln$ by $$\ln(A)=\sum_{n=1}^{+\infty} \dfrac{(-1)^{n-1}}{n}(A-1)^n.
$$
Since
 $\ln(A) \in(p\mathbb{Z}_p)$ and $B\in(p\mathbb{Z}_p),$  We then define $A^B$ as
 $$
 6A^B=e^{B \ln(A)}.
 $$
We note $A=[a_0,a_1,...,a_n,...]$ and $B=[b_0,b_1,...,b_n,...]$
the developments in p-adic continued fractions of $A$ and $B$ respectively. Their convergents are noted by
 $$
 A_n=\dfrac{^ap_n}{^aq_n}=[a_0,a_1,...,a_n],$$
$$B_n=\dfrac{^bp_n}{^bq_n}=[b_0,b_1,...,b_n].
$$
\noindent From Lemmas $1$ and $2$, it is easy to see that
\\
\begin{center}
$
\begin{cases}
\mid A_n \mid_p = \dfrac{\mid ^ap_n \mid_p}{\mid ^aq_n \mid_p}<\dfrac{1}{\mid a_1 \mid_p}<\dfrac{1}{p}\\
\mid B_n \mid_p = \dfrac{\mid ^bp_n \mid_p}{\mid ^bq_n \mid_p}<\dfrac{1}{\mid b_1 \mid_p}<\dfrac{1}{p}
\end{cases}
$
$\Longrightarrow$
$\begin{cases}
\mid ln(A_n) \mid_p <\dfrac{1}{p}\\
\mid B_n ln(A_n) \mid_p<\dfrac{1}{p^2}<\dfrac{1}{p}.
\end{cases}
$
\end{center}

\noindent We deduce that
$e^{B_n \ln (A_n)}$ is well defined, so we put $A_n^{B_n}=e^{B_n \ln(A_n)}.$

{\bigskip}

\noindent Our main result is given in the next Theorem.

{\bigskip}

\begin{thm}
 Let $A\in (1+p\mathbb{Z}_p), B\in (p\mathbb{Z}_p)$ and $(\alpha)$ be a real number $>2.$
\\
If $$ \mid a_n\mid_p \geq \mid b_n \mid_p >\mid a_{n-1}\mid_p^{\alpha}\;\; \text{for all } \;\;n\geq2,
$$
then the p-adic continued fractions $A,B$ and $A^B$ are algebraically independent over $\mathbb{Q}.$
\end{thm}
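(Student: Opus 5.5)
We argue by contradiction, following the Liouville-type strategy that Bundschuh used for the real analogue and that the authors used in the transcendence proof of $A^B$. Suppose there is a nonzero polynomial $P(X,Y,Z)\in\mathbb{Z}[X,Y,Z]$ of total degree $d$ with $P(A,B,A^B)=0$. Among all such relations we choose one with $\partial P/\partial Z \not\equiv 0$ and of minimal degree in $Z$. If no relation involves $Z$, then $A$ and $B$ would be algebraically dependent, and we first rule this out separately by the same argument with $A_n,B_n$ alone. We then evaluate $P$ at the approximation point $(A_n,B_n,A_n^{B_n})$ and compare an upper bound for $|P(A_n,B_n,A_n^{B_n})|_p$, coming from the closeness of the point to $(A,B,A^B)$, with a lower bound, coming from the arithmetic nature of the approximants.

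\textbf{Upper bound.} By Lemma 1(iv), $|A-A_n|_p=1/|{}^aq_n\,{}^aq_{n+1}|_p$ and similarly for $B$. The hypothesis gives $|b_{n+1}|_p>|a_n|_p^{\alpha}$ and $|a_{n+1}|_p\ge|b_{n+1}|_p$. Together with Lemma 1(ii), this shows that both errors are at most $|a_1\cdots a_n|_p^{-1}\,|a_n|_p^{-\alpha}$. Since $|a_n|_p^\alpha>|a_{n-1}|_p^{\alpha^2}>\cdots$, this is essentially $|{}^aq_n|_p^{-(1+\alpha-\varepsilon)}$.

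Next we need $|A^B-A_n^{B_n}|_p$. We use $e^{u}-e^{v}=e^v(e^{u-v}-1)$ with $u=B\ln A$, $v=B_n\ln A_n$. Because $\ln$ and $\exp$ are isometric on $p\mathbb{Z}_p$ (as $p$ is odd), we get
\[
|A^B-A_n^{B_n}|_p=|B\ln A-B_n\ln A_n|_p\le\max\bigl(|B-B_n|_p,\;|A-A_n|_p\bigr).
\]
We then apply the Taylor expansion of Lemma 3, read formally in $\mathbb{Q}_p$ with the ultrametric inequality replacing the Lagrange remainder, to $F(x,y)=P(x,y,e^{y\ln x})$ at $(A,B)$. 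Since $F(A,B)=0$, every term carries at least one factor of the small differences, which gives
\[
|P(A_n,B_n,A_n^{B_n})|_p\ll |{}^aq_n|_p^{-(1+\alpha-\varepsilon)}.
\]

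\textbf{Lower bound; this is the main obstacle.} The number $A_n^{B_n}$ is a rational power of a rational number, and with irrational $B$ it is generally transcendental, so no naive Liouville bound is available. The plan is to handle it as Bundschuh does. Since $B_n={}^bp_n/{}^bq_n$, the quantity $\gamma_n=A_n^{B_n}$ is algebraic of degree at most ${}^bq_n$: it is a root of $\gamma^{\,^bq_n}=A_n^{\,^bp_n}$, with height controlled by Lemma 2, namely $H(\gamma_n)\le (p+1)^{O(n)}$ in the real absolute value together with the denominators $|{}^aq_n|_p$-powers. Clearing denominators, $P(A_n,B_n,\gamma_n)$ is a nonzero algebraic number. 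Nonvanishing for infinitely many $n$ follows from the minimality of $P$ in $Z$, after possibly passing to a neighbouring index. We then apply the product formula (a Liouville inequality in the number field $\mathbb{Q}(\gamma_n)$). This yields a lower bound of the shape $|{}^aq_n|_p^{-c\,d}\cdot(p+1)^{-O(n\,d\,{}^bq_n)}$.

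The delicate part is that the degree ${}^bq_n$ enters this exponent. To absorb it we exploit the very fast growth forced by $\alpha>2$: $\log|{}^aq_{n}|_p$ grows like $\alpha^n$, while ${}^bq_n$ and $n$ are controlled via Lemma 2 and the $p$-adic sizes. From this we want to show that the exponent $1+\alpha-\varepsilon$ in the upper bound beats $c\,d$ plus the degree contribution, for infinitely many $n$ and after choosing the subsequence appropriately. Here the condition $\alpha>2$, and not merely $\alpha>1$, is what we expect to need, since the three-number case needs one extra unit of approximation exponent compared with the two-number result of [4]. If the degree term resists this, the fallback is to use instead the explicit $p$-adic series for $A_n^{B_n}$ truncated at a suitable order, so as to work only with rational approximants. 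The two bounds then contradict each other for large $n$, which proves that $A$, $B$ and $A^B$ are algebraically independent over $\mathbb{Q}$.
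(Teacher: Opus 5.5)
There is a genuine gap, and it is exactly where you flagged it: the lower bound. Your upper-bound half is essentially fine. The identity $|A^B-A_n^{B_n}|_p=|B\ln A-B_n\ln A_n|_p$, via the isometry of $\ln$ and $\exp$, is correct. Only your claim $|B-B_n|_p\le|a_1\cdots a_n|_p^{-1}|a_n|_p^{-\alpha}$ does not follow: the hypotheses allow $|a_n|_p$ to exceed any fixed power of $|b_n|_p$, and you only get $|B-B_n|_p<|b_1\cdots b_n|_p^{-2}|a_n|_p^{-\alpha}$. No Liouville or product-formula lower bound can close the argument, for two reasons.

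\emph{Fixed $\alpha$ against arbitrary $d$.} Your plan needs $1+\alpha-\varepsilon$ to beat $c\,d$ plus a degree term, but $\alpha$ is fixed while $d$ is the degree of an arbitrary relation. Since ${}^ap_n,{}^aq_n\in\mathbb{Z}[1/p]$ and their $p$-power denominators survive in the reduced fraction, $h(A_n)\ge\log|{}^aq_n|_p-O(1)$, with $h$ the absolute logarithmic height. Yet the error can be as large as a constant times $|{}^aq_n|_p^{-(\alpha+2)}$; in the example of Section 4, $|b_{n+1}|_3=3\,|a_n|_3^{3}$. So the comparison only excludes relations whose total degree is small compared with $\alpha$. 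This already breaks your preliminary step of ruling out a relation between $A$ and $B$ ``by the same argument''.

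\emph{The degree of $A_n^{B_n}$.} The degree term is not controlled by Lemma 2, which bounds only the archimedean size of ${}^bq_n$. Write $B_n=r_n/s_n$ in lowest terms. The identity ${}^bp_n\,{}^bq_{n-1}-{}^bp_{n-1}\,{}^bq_n=\pm1$ gives $s_n={}^bq_n\,|{}^bq_n|_p\ge\min(1,b_1)\,|b_1\cdots b_n|_p$, with ${}^bq_n$ read as a real number. Moreover $\gamma_n$ is a root of $X^{s_n}-A_n^{r_n}$, which is irreducible whenever $A_n$ is not a perfect power, so $\mathbb{Q}(\beta)$ typically has degree of order $s_n$. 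The Liouville inequality in $\mathbb{Q}(\beta)\subset\mathbb{Q}_p$ is $|\beta|_p\ge e^{-D\,h(\beta)}$ with $D=[\mathbb{Q}(\beta):\mathbb{Q}]$. Hence the exponent of your lower bound carries an extra factor of order $|b_1\cdots b_n|_p$, while the exponent of the upper bound is $O(\log|{}^aq_n|_p)$ in the example above. A contradiction would need at least $\log|b_{n+1}|_p\gg|b_1\cdots b_n|_p$, not merely $\log|b_{n+1}|_p>\alpha\log|a_n|_p$. The loss occurs at every $n$, so no subsequence helps.

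Your fallback fails in the same way. To make the truncation error of the series for $e^{B_n\ln A_n}$ comparable to $|B-B_n|_p$, you need order $N\asymp\log(1/|B-B_n|_p)$. The truncated rational then has logarithmic height at least of order $N\log(|{}^aq_n|_p\,|{}^bq_n|_p)$, and its Liouville bound lies far below the truncation error $p^{-O(N)}$. Independently, minimality of $P$ in $Z$ says nothing about whether $P(A_n,B_n,\gamma_n)\ne0$; that needs an actual zero estimate. For comparison, the paper's proof uses the same skeleton and reaches its final inequality $p^{2\sum_{k<n}\alpha^k+\alpha^n}\le C_6(p+1)^{nd}$ only by applying $|x|_p\,|x|\ge1$ to $Q_n^{dB_n}Q(A_n,B_n,A_n^{B_n})$ as though it were a nonzero integer of size at most $C_4(p+1)^{nd}$. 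That is, it ignores precisely the irrationality of $A_n^{B_n}$ and the $p$-power denominators described above. The obstacle you identified is real and is not overcome there either; closing it requires an ingredient beyond Liouville-type estimates.
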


{\bigskip}

\noindent{\bf Remark. \ }
We note that, with the hypotheses of Theorem 3.1 on $a_n$ and $b_n,$ the p-adic continued fractions $A,B$ and $A^B$ are three transcendental numbers, see $[7]$ and $[2].$

{\bigskip }

\noindent{\bf Proof of Theorem 3.1 }
{\bigskip}

\noindent Assume that $A,B,A^B$ are not algebraic independent over $\mathbb{Q.}$ So, there exists a polynomial
 $ Q \in \mathbb{Z}[X,Y,Z]\setminus \{0\}$ with a of minimal degree $d$ such that
 $$ \; Q(A,B,A^B)=0.
 $$
\noindent Let $Q(X,Y,Z)=\sum_{k+l+m=0}^{d}e_(k,l,m)X^kY^l Z^m.$ Firstly, according to the  mean value theorem, we have\\
\begin{eqnarray*}
Q(A,B,A^B)- Q(A_n,B_n,{A_n}^{B_n})& = (A-A_n)\dfrac{\partial Q}{\partial x}(A_{\theta},B_{\theta},C_{\theta})
+ (B-B_n) \dfrac{\partial Q}{\partial y}(A_{\theta},B_{\theta},C_{\theta}) \\
\\
                                   &+ (A^B-A_n^{B_n}) \dfrac{\partial Q}{\partial z}(A_{theta},B_{theta},C_{\theta})
\end{eqnarray*}
where $(A_{\theta},B_{\theta},C_{\theta})=(A+\theta(A-A_n),B+\theta(B-b_n),A^B+\theta(A^B-A_n^{B_n})).$ \\
\\
So, we deduce that there exists a positive constant $C=C(Q,A,B)$ such that
\begin{equation}
|Q(A,B,A^B)- Q(A_n,B_n,{A_n}^{B_n}) |_p \leq C \max( |A-A_n|_p, |B-B_n|_p, |A^B-A_n^{B_n}|_p)
\end{equation}

Secondly, let $f(x,y)=x^y$ with $x$ and $y$ are strictly positive and applying the real Taylor theorem to $f$ in $(A_n,B_n)$, we get:
\begin{align*}
f(x,y) =\sum_{\nu_1+\nu_2\geq1}C_{(\nu_1,\nu_2)}(x-A_n)^{\nu_1}(y-B_n)^{\nu_2}
\end{align*}

\noindent Let $C_i=C_{(\nu_1,\nu_2)}$ for $i\in\{1,2\}$,
with $(\nu_1,\nu_2)$ admits $1$ in the $i^{th}$ coordinate and $0$ in the other case.\\
\\
We have $C_1=\dfrac{\partial f}{\partial x}(A_n,B_n)=\frac{B_n}{A_n} A_n^{B_n} $ and
$C_2=\dfrac{\partial f}{\partial y}(A_n,B_n)=ln(A_n)A_n^{B_n}\neq 0.$
\\
So we get,
 $$A^B-A_n^{B_n}=C_1 (A-A_n)+ C_2 (B-B_n) +O(\mid B-B_n\mid)
 $$
In addition, introducing its p-adic absolute value, there exists a positive and bounded real constant
 $C_3=C(A,B)= max(\mid C_1 \mid_p, \mid C_2 \mid_p)<1/p$
such that
\begin{align*}
\mid A^B-A_n^{B_n}\mid_p &\leq C_3.max(\mid A-A_n\mid_p,\mid\ B-B_n \mid_p) \\
                          &  \leq C_3.\mid B-B_n \mid_p = \dfrac{C_3}{\mid \;^bq_n\;.\;^bq_{n+1}\mid_p}.
\end{align*}

It yields that
\begin{equation}
\mid A^B-A_n^{B_n}\mid_p\leq \dfrac{C_3}{\mid \;^bq_n\;.\;^bq_{n+1}\mid_p}.
\end{equation}
\\
In the other hand, we have
$$Q(A_n,B_n,A_n^{B_n})=\sum_{k+l+m=0}^d e_(k,l,m)(\dfrac{\;^ap_n}{\;^aq_n})^{k+mB_n}(\dfrac{\;^bp_n}{\;^bq_n})^{l}.
$$
It follows that
$$
|Q(A_n,B_n,A_n^{B_n})| \leq \sum_{k+l+m=0}^d |e_(k,l,m)|max (\dfrac{\;^ap_n}{\;^aq_n},\dfrac{\;^bp_n}{\;^bq_n})^{k+l+mB_n}.
$$

\noindent We note that $^ap_n, ^aq_n, ^bp_n$ and $^bq_n$ are rational numbers, so let $P_n\in \mathbb{N}$ and $Q_n\in\mathbb{N}^*$ with $gcd(P_n,Q_n)=1$
such that $ max( \dfrac{\;^ap_n}{\;^aq_n}, \dfrac{\;^bp_n}{\;^bq_n}) =\dfrac{P_n}{Q_n}.$
\\
Then, we find $$Q(A_n,B_n,A_n^{B_n})=\sum_{k+l+m=0}^de_(k,l,m)\dfrac{P_n^{k+l+mB_n}}{Q_n^{k+l+mB_n}}.
$$
Therefore, there exists a real and bounded constant \\
$ C_4=C(A,B)=(d+1)H(Q)$ where $ H(Q)= max |e_(k,l,m)| $ such that
\begin{equation}
    \mid Q_n^{dB_n}.Q(A_n,B_n,A_n^{B_n})\mid \leq C_4 (max(P_n,Q_n))^{dB_n} \leq C_4P_n^{dB_n} <C_4P_n^d,
\end{equation}

\noindent By using the product formula
$$ \mid Q_n^{dB_n}.Q(A_n,B_n,A_n^{B_n})\mid_p.\mid Q_n^{dB_n}.Q(A_n,B_n,A_n^{B_n}) \mid\geq 1$$
and relationship $(3),$  we obtain
 \begin{equation*}
  \mid Q_n^{dB_n}.Q(A_n,B_n,A_n^{B_n})\mid_p\geq \dfrac{1}{\mid Q_n^{dB_n}.Q(A_n,B_n,A_n^{B_n})\mid}>\dfrac{1}{C'P_n^d}.
\end{equation*}
Then we would have
 \begin{equation}
\dfrac{1}{C_4P_n^d\mid Q_n^{dB_n}\mid_p } <\mid Q(A_n,B_n,A_n^{B_n})\mid_p.
\end{equation}

\noindent According to results $(1)$, $(2)$ and $(4)$ we get
 \\
$$\dfrac{1}{C_4P_n^d\mid Q_n^{dB_n}\mid_p } <\mid Q(A_n,B_n,A_n^{B_n})\mid_p< \dfrac{C_3}{\mid \;^bq_n\;.\;^bq_{n+1}\mid_p}
$$
As, $Q_n\in\mathbb{N}^*$, therefore $\mid Q_n\mid_p=
\begin{cases}
1\;\;if\;\; p\; \text{does not divide }\;Q_n\\
p^{-n_0}\;\;\text{with}\;\;n_0\in\mathbb{N}^*\;\;\text{if}\;\;p/Q_n
\end{cases}$
\\
which gives,
$$\dfrac{1}{C_4P_n^dp^{n_0}}<\dfrac{C_3}{\mid\;^bq_n\;^bq_{n+1}\mid_p}.
$$
By (ii) of Lemma 2, we have $$\mid\;^bq_n\;^bq_{n+1}\mid_p=\mid b_1b_2...b_n\mid_p^2.\mid b_{n+1}\mid_p.
$$
It follows that there a real and bounded constant $C_5=CC_4>0$ such that,
$$
\mid b_1b_2...b_n\mid_p^2.\mid b_{n+1}\mid_p<C_5p^{n_0}P_n^d.
$$
Since $P_n \leq ^ap_n \leq (p+1)^n,$ it yields that
\begin{align}
\mid b_1b_2...b_n\mid_p^2.\mid b_{n+1}\mid_p \leq C_5p^{n_0}(p+1)^{nd}.
\end{align}
From the inequalities
$$\mid b_{n+1}\mid_p\geq \mid b_n\mid_p^{\alpha}\geq \mid b_{n-1}\mid^{\alpha^2}\geq...\geq \mid b_1\mid_p^{\alpha^n},$$
\\
the relationship $(5)$ becomes
$$ \mid b_1\mid_p^{2\sum_{k=0}^{n-1}\alpha^k+\alpha^n} \leq \mid b_1b_2...b_n\mid_p^2.\mid b_{n+1}\mid_p \leq C_6(p+1)^{nd}$$\
with $C_6=C_5p^{n_0}.$ Since $p\leq \mid b_1\mid_p,$ we finally get
$$
p^{2\sum_{k=0}^{n-1}\alpha^k+\alpha^n} \leq C_6 (p+1)^{nd}
$$
 which is not verified and leads to a contradiction,
because the inequality ${2\sum_{k=0}^{n-1}\alpha^k+\alpha^n} <nd$ is not correct for $\alpha >2 $ and $n$ sufficiently large.
\\
Therefore, we conclude that $A,B,A^B$ are algebraically independent over $\mathbb{Q}.$

{\bigskip}

\section{Numerical application}

{\bigskip}

Let $p=3,$ $\alpha = 3$ and

\begin{equation*}
\left\{
\begin{array}{c}
a_0=1,  \ a_{n}= 3^{-k_n-1},  \\
\\
b_0=0,  \ b_{n}=3^{-k_n}, \\
\\
\text{ where } \ k_1=1, \ k_n=3 k_{n-1}+4 \text{ for }  \  n \geq 1.  .
\end{array}
\right.
\end{equation*}
So, we have

$$\mid a_n \mid_3= 3^{k_n+1} >
\mid b_n \mid_3=3^{k_n}
$$
and
$$\mid b_n= \mid_3=3^{k_n} > \mid a_{n-1} \mid_3^{3} = 3^{k_n-1}.
$$
It follows from Theorem $2$ that $A=[1;3^{-8},3^{-26},3^{-80},...],$  $B=[0;3^{-7},3^{-25},3^{-79},...]$ and $A^B$
 are algebraically independent over $\mathbb{Q}.$

{\bigskip}

\noindent \textbf{Statements and Declarations}

{\bigskip}

\noindent \textbf{Declaration of interests tool. } I have nothing to declare.

{\bigskip}

\noindent \textbf{Funding sources. } This research did not receive any specific grant from funding agenciers in the public, commercial or not-for-profit-sectors.

{\bigskip}

\noindent \textbf{Data availability. } no data was used for the research described in the article.

{\bigskip}

\end{document}